\newskip\prethm \prethm3.0pt plus1.3pt minus.4pt
\newskip\posthm \posthm2.7pt plus1.4pt minus.3pt
\newtheoremstyle{STATEMENT}{\prethm}{\posthm}{\itshape}{\parindent}{\scshape}{.}{.6em plus.2em minus.1em}{}
\newtheoremstyle{EXPLANATION}{\prethm}{\posthm}{}{\parindent}{\scshape}{.}{.6em plus.2em minus.1em}{}
\theoremstyle{STATEMENT}
\newtheorem{theorem}{Theorem}
\newtheorem{assertion}{Assertion}
\theoremstyle{EXPLANATION}
\newtheorem{remark}{Remark}
\DeclareMathOperator{\Ker}{Ker}
\title{Elementary proof of Jordan-Kronecker theorem}
\author{Ivan K. Kozlov}
\date{}
\begin{document}

\maketitle

\begin{abstract}
In this paper we prove the Jordan-Kronecker theorem which gives a
canonical form for a pair of skew-symmetric bilinear forms on a
finite-dimensional vector space over an algebraically closed field.
\end{abstract}

\section{Introduction}

The Jordan--Kronecker theorem gives a canonical form for a pair of
skew-symmetric bilinear form on a finite-dimensional vector space
over an algebraically closed field. This theorem from linear algebra
has recently found various applications in various fields of
mathematics (see, for example \cite{Bolsinov_Oshemkov},
\cite{Gelfand_Zhaharevich}). The proof of the Jordan--Kronecker
theorem can be found in \cite{Gantmacher} and \cite{Thompson}. In
this paper we give a simpler proof of this theorem.

Throughout the paper we assume that all vector spaces are finite
dimensional and the underlying field has characteristic $\ne 2$.

\begin{theorem}[Jordan--Kronecker]
\label{T:Jordan-Kronecker_theorem} Let $A$ and $B$ be skew-symmetric
bilinear forms on a vector space $V$ over a field $\mathbb{K}$. If
the field $\mathbb{K}$ is algebraically closed, then there exists a
basis of the space $V$ such that the matrices of both forms $A$ and
$B$ are block-diagonal matrices:

{\footnotesize
$$
A =
\begin{pmatrix}
A_1 &     &        &      \\
    & A_2 &        &      \\
    &     & \ddots &      \\
    &     &        & A_k  \\
\end{pmatrix}
\quad  B=
\begin{pmatrix}
B_1 &     &        &      \\
    & B_2 &        &      \\
    &     & \ddots &      \\
    &     &        & B_k  \\
\end{pmatrix}
$$
}

where each pair of corresponding blocks $A_i$ and $B_i$ is one of
the following:

\begin{enumerate}

\item Jordan block with eigenvalue $\lambda \in \mathbb{K}$
{\footnotesize
$$
A_i =\left(
\begin{array}{c|c}
  0 & \begin{matrix}
   \hphantom- \lambda &\hphantom-  1&        & \\
      & \hphantom- \lambda & \ddots &     \\
      &        & \ddots & \hphantom- 1  \\
      &        &        & \hphantom- \lambda   \\
    \end{matrix} \\
  \hline
  \begin{matrix}
  -\lambda  &        &   & \\
  -1   & -\lambda &     &\\
      & \ddots & \ddots &  \\
      &        & -1   & -\lambda \\
  \end{matrix} & 0
 \end{array}
 \right)
\quad  B_i= \left(
\begin{array}{c|c}
  0 & \begin{matrix}
    \hphantom- 1 & &        & \\
      & \hphantom- 1 &  &     \\
      &        & \ddots &   \\
      &        &        & \hphantom- 1   \\
    \end{matrix} \\
  \hline
  \begin{matrix}
  -1  &        &   & \\
     & -1 &     &\\
      &  & \ddots &  \\
      &        &    & -1 \\
  \end{matrix} & 0
 \end{array}
 \right)
$$
}
\item Jordan block with eigenvalue $\infty$
{\footnotesize
$$
A_i = \left(
\begin{array}{c|c}
  0 & \begin{matrix}
   \hphantom- 1 & &        & \\
      &\hphantom- 1 &  &     \\
      &        & \ddots &   \\
      &        &        & \hphantom- 1   \\
    \end{matrix} \\
  \hline
  \begin{matrix}
  -1  &        &   & \\
     & -1 &     &\\
      &  & \ddots &  \\
      &        &    & -1 \\
  \end{matrix} & 0
 \end{array}
 \right)
\quad B_i = \left(
\begin{array}{c|c}
  0 & \begin{matrix}
    0 & \hphantom-  1&        & \\
      & 0 & \ddots &     \\
      &        & \ddots & \hphantom-  1  \\
      &        &        & 0   \\
    \end{matrix} \\
  \hline
  \begin{matrix}
  0  &        &   & \\
  -1   & 0 &     &\\
      & \ddots & \ddots &  \\
      &        & -1   & 0 \\
  \end{matrix} & 0
 \end{array}
 \right)
$$
}
\item Kronecker block
{\footnotesize
$$
A_i = \left(
\begin{array}{c|c}
  0 & \begin{matrix}
   \hphantom-  1 & 0      &        &     \\
      & \ddots & \ddots &     \\
      &        &  \hphantom-  1    &  0  \\
    \end{matrix} \\
  \hline
  \begin{matrix}
  -1  &        &    \\
  0   & \ddots &    \\
      & \ddots & -1 \\
      &        & 0  \\
  \end{matrix} & 0
 \end{array}
 \right) \quad  B_i= \left(
\begin{array}{c|c}
  0 & \begin{matrix}
    0 & \hphantom-  1      &        &     \\
      & \ddots & \ddots &     \\
      &        &   0    & \hphantom-  1  \\
    \end{matrix} \\
  \hline
  \begin{matrix}
  0  &        &    \\
  -1   & \ddots &    \\
      & \ddots & 0 \\
      &        & -1  \\
  \end{matrix} & 0
 \end{array}
 \right)
$$
}

\end{enumerate}

A Kronecker block is a $(2k+1) \times (2k+1)$ block, where $k \geq
0$. In particular, if $k=0$, then $A_i$ and $B_i$ are two $1\times
1$ zero matrices
$$
A_i =
\begin{pmatrix}
0
\end{pmatrix} \quad  B_i=
\begin{pmatrix}
0
\end{pmatrix}
$$
\end{theorem}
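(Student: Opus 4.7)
The plan is to prove the theorem by strong induction on $\dim V$, peeling off one canonical block at a time. Specifically, I would find a subspace $U \subset V$ such that the pair $(A|_U, B|_U)$ coincides with one of the three blocks in the theorem statement, together with a common complement $W$ with $V = U \oplus W$ satisfying $A(u, w) = B(u, w) = 0$ for all $u \in U$ and $w \in W$. The inductive hypothesis applied to $(A|_W, B|_W)$ then completes the argument.

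I would begin with the trivial Kronecker block. If $\Ker A \cap \Ker B \neq 0$, where $\Ker A := \{v \in V : A(v, \cdot) = 0\}$ and similarly for $B$, pick any non-zero $v$ in this intersection and take $W$ to be an arbitrary hyperplane complement of $\mathbb{K} v$; the splitting is then automatically orthogonal for both forms, giving a $1 \times 1$ zero block. So assume from now on $\Ker A \cap \Ker B = 0$.

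Next I would look for a non-trivial Kronecker block. Viewing $A$ and $B$ as linear maps $V \to V^*$, this means searching for a chain $v_0, v_1, \ldots, v_k \in V$ with $B v_0 = 0$, $A v_{i-1} = B v_i$ for $i = 1, \ldots, k$, and $A v_k = 0$, of minimal length. Provided such a chain with $k \geq 1$ exists, a careful analysis using minimality should produce ``dual'' vectors $w_1, \ldots, w_k$ so that $U = \langle v_0, \ldots, v_k, w_1, \ldots, w_k \rangle$ carries precisely the Kronecker block of size $2k + 1$ and admits the required splitting. Otherwise, the pencil $\lambda A + \mu B$ is regular.

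In the regular case, one picks parameters $(\lambda_0, \mu_0)$ with $\lambda_0 A + \mu_0 B$ invertible and reduces to the situation where $B$ is non-degenerate by a suitable change of basis of the two-dimensional pencil space. The operator $C := B^{-1} A$ is then $B$-self-adjoint in the sense $B(Cx, y) = A(x, y) = -A(y, x) = B(x, Cy)$. Standard Jordan theory decomposes $V$ into $C$-invariant generalized eigenspaces, which are pairwise $B$-orthogonal; on each such eigenspace one picks a Jordan chain for $C$ and pairs it with its $B$-dual chain to obtain a Jordan block as in case $(1)$ of the theorem. Blocks with eigenvalue $\infty$ correspond to eigenvalues of $\lambda_0 A + \mu_0 B$ that become infinite after the substitution, i.e.\ to the case $\Ker B \neq 0$ in the original coordinates. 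The main obstacle will be the Kronecker step: producing the minimal chain, constructing its duals, and verifying that minimality of $k$ forces $U$ to admit a common $A$- and $B$-orthogonal complement. Once this is done, the regular case reduces to invoking Jordan theory for $B$-self-adjoint operators combined with careful skew-symmetry bookkeeping.
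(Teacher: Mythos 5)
Your overall architecture is the classical one (Gantmacher/Thompson): split off minimal Kronecker chains until the pencil is regular, then normalize a nondegenerate member of the pencil and apply Jordan theory for a self-adjoint operator on a symplectic space. This is a genuinely different route from the paper, which never works with minimal chains: it runs a greedy alternating algorithm starting from an arbitrary $f_0 \in \Ker B$, producing $e_1, f_1, e_2, \dots$ together with a nested family of complements $W_1 \supset W_2 \supset \cdots$, and the type of block (Kronecker versus Jordan with eigenvalue $\infty$) is decided simply by the parity of the step at which the algorithm halts. The regular part of your plan is essentially the paper's Theorem~\ref{T:SelfAdjOper_On_SympSpace} and is fine modulo routine bookkeeping of how Jordan blocks transform under a projective change of the pencil parameter.

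However, there is a genuine gap exactly where you flag ``the main obstacle'': the Kronecker step is not carried out, and it is the heart of the theorem. Three things are missing. First, the dichotomy ``no chain with $k \ge 1$ and $\Ker A \cap \Ker B = 0$ implies the pencil is regular'' is asserted, not proved; the standard argument takes a polynomial vector $v(\lambda) = \sum_i \lambda^i v_i$ of minimal degree in $\Ker(\lambda A + B)$ and reads the chain off its coefficients, and some such argument must appear. Second and third, the construction of the dual vectors $w_1, \dots, w_k$ and the verification that $U$ admits a simultaneous $A$- and $B$-orthogonal complement are precisely the nontrivial content, and ``a careful analysis using minimality should produce'' them is not a proof --- this is where minimality of $k$ must be used in an essential way and where the skew-symmetry bookkeeping actually bites. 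Note that the paper's algorithm dissolves both difficulties at once: it constructs the $e_i$ and $f_j$ simultaneously (so duals never have to be produced after the fact) and carries the complement $W_i$ along at every step, so that when the algorithm halts the splitting $V = V_i \oplus W_i$ with $V_i \perp_{A,B} W_i$ is already in hand. To complete your outline you must either supply the dual-vector and orthogonal-complement arguments in full, or replace that step with an explicit alternating construction of the above kind.
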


\begin{remark}
It is easy to prove that the Jordan--Kronecker form of two forms $A$
and $B$ is unique up to the order of blocks.
\end{remark}

\begin{remark}

If $e_1, \dots, e_k, f_0, \dots, f_{k-1}$ is the basis of a Jordan
block with eigenvalue $\infty$, then in the basis $f_0, e_1, f_1,
\dots, f_{k-1}, e_k$ the matrices of the forms are

$$
A_i = \begin{pmatrix}
0 & -1 &   &    &        &   &    \\
1 & 0  &   &    &        &   &    \\
  &    & 0 & -1 &        &   &    \\
  &    & 1 & 0  &        &   &    \\
  &    &   &    & \ddots &   &    \\
  &    &   &    &        & 0 & -1 \\
  &    &   &    &        & 1 & 0  \\
\end{pmatrix}
\quad B_i =\begin{pmatrix}
0 &    &   &         &        &   &    \\
  & 0  & 1 &         &        &   &    \\
  & -1 & 0 &         &        &   &    \\
  &    &   & \ddots  &        &   &    \\
  &    &   &         & 0      & 1 &    \\
  &    &   &         &  -1    & 0 &    \\
  &    &   &         &        &   & 0  \\
\end{pmatrix}
$$

If $e_1, \dots, e_k, f_0, f_1, \dots, f_k$ is the basis of a
Kronecker block, then in the basis $f_0, e_1, f_1, e_2, f_2, \dots,
e_k, f_k$ the matrices of the forms are

$$
A_i = \begin{pmatrix}
 0  & -1 &         &        &   &    \\
 1 & 0 &         &        &   &    \\
    &   & \ddots  &        &   &    \\
    &   &         & 0      & -1 &    \\
    &   &         &  1    & 0 &    \\
    &   &         &        &   & 0
\end{pmatrix}
\quad B_i =\begin{pmatrix}
0 &    &   &         &        &     \\
  & 0  & 1 &         &        &     \\
  & -1 & 0 &         &        &     \\
  &    &   & \ddots  &        &     \\
  &    &   &         & 0      & 1   \\
  &    &   &         &  -1    & 0
\end{pmatrix}
$$

\end{remark}

\section{Self-adjoint operator on a symplectic space.}
First, let us consider the case when one of the forms in
nondegenerate. (Without loss of generality this is form $B$). Let us
restate the problem.

Recall that any bilinear function $B: V \times V \to \mathbb{K}$
defines a map $B: V \to V^{*}$ given by the formula
$$
\langle Bu, v \rangle =B(u, v),
$$
where $\langle Bu, v \rangle$ is the value of a covector $Bu$ on a
vector $v$. If a bilinear form is nondegenerate $\Ker B \ne 0$, then
it defines an isomorphism between the given space and its dual $V
\simeq_{B} V^{*}$.

Put $P = B^{-1} A: V \to V$. The operator $P$ is self-adjoint with
respect to both forms $A$ and $B$. $$ A(Pu, v) = A(u, Pv), \quad
B(Pu, v) = B(u, Pv)
$$

In the sequel we need the following simple assertions.

\begin{assertion}
If $\mathcal P$ is a self-adjoint operator on a symplectic space
$V$, then the orthogonal complement of an invariant subspace $W
\subset V$ is invariant. That is $$ PW \subset W \Rightarrow
PW^{\perp} \subset W^{\perp} $$
\end{assertion}
\begin{proof}
For any $v \in W^{\perp}$ we have $ B(u, Pv) = B(Pu, v) = 0, $ since
$Pu \in W$.
\end{proof}

\begin{assertion} \label{A:IsotrSubs_for_SelfAdj_on_Symp}
Let $P$ be a self-adjoint operator on a symplectic space $(V,
\omega)$. Then for any vector $v \in (V, \omega)$ all vectors $v,
Pv, \dots, P^n v, \dots$ are pairwise orthogonal.
\end{assertion}
\begin{proof}
Evidently, $B(P^{i}v, P^{j}v) = B(P^{i+j}v, v) = B(v, P^{i+j}v) =
0$.
\end{proof}

Now Theorem \ref{T:Jordan-Kronecker_theorem} can be restated as
follows.

\begin{theorem} \label{T:SelfAdjOper_On_SympSpace}
For any self-adjoint operator $\mathcal{P}: V^{2n} \to V^{2n}$ on a
symplectic space $(V^{2n}, \mathcal{B})$ over an algebraically
closed field $\mathbb{K}$ there exists a basis of $V^{2n}$ such that
the matrix $P$ of the operator $\mathcal{P}$ and the matrix $B$ of
the form $\mathcal{B}$ are block-diagonal matrices

{\footnotesize
$$
P =
\begin{pmatrix}
P_1 &     &        &      \\
    & P_2 &        &      \\
    &     & \ddots &      \\
    &     &        & P_k  \\
\end{pmatrix}
\quad  B=
\begin{pmatrix}
B_1 &     &        &      \\
    & B_2 &        &      \\
    &     & \ddots &      \\
    &     &        & B_k  \\
\end{pmatrix}
$$
}
 Each pair of blocks $P_i$ and $B_i$ has the form {\footnotesize
$$
P_i =\left(
\begin{array}{c|c}
  0 & \begin{matrix}
    \lambda & \hphantom-  1&        & \\
      & \lambda & \ddots &     \\
      &        & \ddots & \hphantom- 1  \\
      &        &        & \lambda   \\
    \end{matrix} \\
  \hline
  \begin{matrix}
  -\lambda  &        &   & \\
  -1   & -\lambda &     &\\
      & \ddots & \ddots &  \\
      &        & -1   & -\lambda \\
  \end{matrix} & 0
 \end{array}
 \right)
\quad  B_i= \left(
\begin{array}{c|c}
  0 & \begin{matrix}
    \hphantom-  1 & &        & \\
      & \hphantom-  1 &  &     \\
      &        & \ddots &   \\
      &        &        & \hphantom-  1   \\
    \end{matrix} \\
  \hline
  \begin{matrix}
  -1  &        &   & \\
     & -1 &     &\\
      &  & \ddots &  \\
      &        &    & -1 \\
  \end{matrix} & 0
 \end{array}
 \right)
$$
}
\end{theorem}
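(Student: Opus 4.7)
The plan is to combine a spectral decomposition of $\mathcal{P}$ with an inductive construction of symplectic Jordan chains. Since $\mathbb{K}$ is algebraically closed, we decompose $V^{2n} = \bigoplus_{\lambda} V_\lambda$ into the generalized eigenspaces $V_\lambda = \Ker(\mathcal{P} - \lambda\,\mathrm{Id})^{2n}$. Self-adjointness makes this decomposition $\mathcal{B}$-orthogonal: for $\lambda \neq \mu$, $u \in V_\lambda$, $v \in V_\mu$, the identity
$$ 0 = \mathcal{B}\bigl((\mathcal{P}-\lambda\,\mathrm{Id})^{2n} u,\, v\bigr) = \mathcal{B}\bigl(u,\, (\mathcal{P}-\lambda\,\mathrm{Id})^{2n} v\bigr) $$
combined with the invertibility of $\mathcal{P}-\lambda\,\mathrm{Id}$ on $V_\mu$ forces $\mathcal{B}(V_\lambda, V_\mu) = 0$. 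Each $V_\lambda$ is therefore a symplectic subspace, and it suffices to treat a single $V_\lambda$. Replacing $\mathcal{P}$ by $\mathcal{N} := \mathcal{P} - \lambda\,\mathrm{Id}$, we reduce to building a symplectic Jordan basis for a nilpotent self-adjoint operator $\mathcal{N}$ on a symplectic space.

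The crux is then to peel off one $2k$-dimensional block at a time. Let $k$ be the nilpotency index of $\mathcal{N}$ on $V_\lambda$ and pick $v$ with $\mathcal{N}^{k-1}v \neq 0$. By Assertion~\ref{A:IsotrSubs_for_SelfAdj_on_Symp} the chain $v, \mathcal{N}v, \ldots, \mathcal{N}^{k-1}v$ is totally $\mathcal{B}$-isotropic. Since $\mathcal{B}$ is nondegenerate and these $k$ vectors are linearly independent, we can find $u$ satisfying
$$ \mathcal{B}(u, \mathcal{N}^j v) = \delta_{j,\,k-1}, \qquad 0 \le j \le k-1. $$
By Assertion~\ref{A:IsotrSubs_for_SelfAdj_on_Symp} the chain $u, \mathcal{N}u, \ldots, \mathcal{N}^{k-1}u$ is also isotropic, and self-adjointness together with $\mathcal{N}^k = 0$ yields the cross-pairing
$$ \mathcal{B}(\mathcal{N}^i u, \mathcal{N}^j v) = \mathcal{B}(u, \mathcal{N}^{i+j}v) = \begin{cases} 1, & i+j = k-1, \\ 0, & \text{otherwise.} \end{cases} $$
These relations force the $2k$ vectors to be linearly independent and $\mathcal{B}$ to be nondegenerate on their span $W$, which is moreover $\mathcal{N}$-invariant. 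In the ordering $u, \mathcal{N}u, \ldots, \mathcal{N}^{k-1}u, \mathcal{N}^{k-1}v, \ldots, \mathcal{N}v, v$, the matrices of $\mathcal{B}|_W$ and $\mathcal{P}|_W$ realize the prescribed block forms $B_i$ and $P_i$.

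The induction closes quickly: by the first assertion, $W^\perp$ is $\mathcal{N}$-invariant, and it is symplectic because $W$ is $\mathcal{B}$-nondegenerate. Applying the induction hypothesis on $\dim V$ to $W^\perp$ and reassembling the blocks finishes the decomposition. The step I expect to be hardest is the middle one: choosing the dual cyclic vector $u$ correctly and verifying that the cross-pairing relations produce, after the reversal of the $v$-chain, exactly the identity-block pattern of $B_i$ and the required Jordan-block form $P_i$. The spectral decomposition and the passage to $W^\perp$ are routine applications of the two preliminary assertions.
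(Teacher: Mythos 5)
Your proposal is correct and follows essentially the same route as the paper: a $\mathcal{B}$-orthogonal generalized eigenspace decomposition (with the same invertibility argument for $\mathcal{P}-\lambda\,\mathrm{Id}$ on $V_\mu$), followed by extracting one block in the nilpotent case from a maximal chain $v,\mathcal{N}v,\dots,\mathcal{N}^{k-1}v$ and a dual vector $u$ pairing only with $\mathcal{N}^{k-1}v$, then restricting to the invariant symplectic complement. The only cosmetic difference is that you justify the existence of $u$ directly from nondegeneracy of $\mathcal{B}$, where the paper routes this through extending the isotropic chain to a Lagrangian subspace and then to a symplectic basis.
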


\begin{proof}[Proof. Theorem \ref{T:SelfAdjOper_On_SympSpace}]

The proof is in two steps.

\begin{itemize}

\item[Step 1.] First, let us prove the statement when $\mathcal P$ is
a nilpotent operator $\mathcal{P}^{n} = 0$.

Let us show how to extract one block. Suppose that the degree of the
operator $\mathcal{P}$ is $m$, that is
$$
P^m =0, \quad P^{m-1} \ne 0
$$

Take an arbitrary vector $e_1 \in V$ such that $P^{m-1}e_1 \ne 0$.
Let $e_i = P^{i-1}e_1$. Then $\langle e_1, \dots, e_n \rangle$ is an
isotropic subspace.

Since the form $B$ is nondegenerate on $V$ there exists a vector
$f_n \in V$ such that $$ B(e_i, f_n) = \delta^i_n$$

The existence of the vector $f_n$ easily follows from the following
simple assertions from linear algebra.

\begin{assertion}
Any isotropic subspace is contained in a Lagrangian subspace.
\end{assertion}
\begin{assertion}
Any basis $e_1, \dots, e_n$ of a Lagrangian subspace $L \subset (V,
\omega)$ can be extended to a symplectic basis $e_i, f_j$ of the
space $(V, \omega)$
$$
\omega(e_i, f_j) = \delta^i_j
$$
\end{assertion}

Put $f_i = P^{n-i}f_n$. Then $e_i, f_j$ is a basis of a Jordan
block. It is easy to see that
$$
B(e_i, e_j) =0, \quad B(f_i, f_j) =0
$$
(this is assertion \ref{A:IsotrSubs_for_SelfAdj_on_Symp}). It is
also easy to see that
$$
B(e_i, f_j) = \delta^i_j
$$

Indeed, $B(e_i, f_j) = B(e_i, P^{n-j}f_n) = B(P^{n-j}e_i, f_n) =
B(e_{n+i-j}, f_n)= \delta^{n+i-j}_n = \delta^i_j$.

It means that vectors $e_i, f_j$ are linearly independent. In the
basis $e_i, f_j$ the restrictions of the form $\mathcal{B}$ and the
operator $\mathcal{P}$ to the space $\langle e_i, f_j \rangle$ have
matrices

$$
P = \left(
\begin{array}{c|c}
  \begin{matrix}
    0 &        &        &   \\
    1 & 0      &        &   \\
      &  \ddots & \ddots &   \\
      &        &  1     & 0 \\
    \end{matrix} &  \\
  \hline
   & \begin{matrix}
  0  & 1      &        &     \\
     & 0      & \ddots &     \\
     &        & \ddots &   1 \\
     &        &        & 0   \\
  \end{matrix}
 \end{array}
 \right)
, \quad Q =
\begin{pmatrix}
  0 & E \\
 -E & 0 \\
\end{pmatrix}
$$

\item[Step 2.] General case. The space $V$ decomposes into the sum of generalized eigenspaces of
the operator $\mathcal{P}$
$$
V = \bigoplus_{\lambda \in \mathbb K}V^{\lambda}
$$

Recall that a generalized eigenspace with eigenvalue $\lambda$
consists of all vectors $v \in V$ such that $(P - \lambda E)^m = 0$
for some natural $m \in \mathbb N$.

\begin{assertion}
Generalized eigenspaces are orthogonal with respect to the form
$\mathcal{B}$
$$
V^{\lambda} \perp_{\mathcal{B}} V^{\mu}, \quad \lambda \ne \mu
$$
\end{assertion}
\begin{proof}
If $\lambda \ne \mu$, then the restriction of the operator $(P -
\lambda E)$ to $V_{\mu}$ is nondegenerate. Hence for any vector $v
\in V_{\mu}$ the vector $w = (P -\lambda E)^{-1}v$ such that $(P -
\lambda E)w =v$ is well-defined.

For any $e_\lambda \in V_{\lambda}, e_{\mu} \in V_{\mu}$ we have
$$
B(e_{\lambda}, e_{\mu}) = B(e_{\lambda}, (P - \lambda E)(P - \lambda
E)^{-1} e_{\mu}) =  B((P - \lambda E)e_{\lambda}, (P - \lambda
E)^{-1} e_{\mu})$$
$$ = \dots = B((P - \lambda E)^{m}e_{\lambda}, (P - \lambda E)^{-m}
e_{\mu}) =0
$$ \end{proof}

To conclude the proof, it remains to apply step $1$ to the
restriction of the operator $P - \lambda E$ to the corresponding
generalized eigenspace for each eigenvalue $\lambda$.

\end{itemize}

Theorem \ref{T:SelfAdjOper_On_SympSpace} is proved. \end{proof}

\section{Proof of Jordan--Kronecker theorem.}
\begin{proof}[Proof. Jordan--Kronecker theorem.]
If the form $B$ is nondegenerate, then everything is proved (see
Theorem \ref{T:SelfAdjOper_On_SympSpace}). Suppose that $\Ker B \ne
0$.

Let us show how to extract one block. That block will be either a
Kronecker block or a Jordan block with eigenvalue $\infty$. We need
to do the following:
\begin{enumerate}
\item To decompose the space into a sum of subspaces orthogonal
w.r.t. $A$ and $B$
$$
V = V_m \oplus W_m, \quad V_m \perp_{A, B} W_m
$$
\item To find a basis $e_i, f_j$ of the space $V_m$ such that
$$
A(e_i, f_j) = \delta^{i}_{j+1}, \quad B(e_i, f_j) = \delta^{i}_{j},
$$
and all other pairs of basic vectors are orthogonal w.r.t. $A$ and
$B$.
\end{enumerate}

We construct a block in several steps. On odd steps we search for
vectors $f_i$ and on even steps we try to find  vectors $e_j$. If we
can not find a vector, then we have found a block.

\begin{enumerate}
\item[Step 1.] Take an arbitrary vector $f_{0} \in \Ker
B$ and any additional subspace $W_1 \subset V$

$$
\langle f_0 \rangle + W_1 = V
$$

\item[Step 2.] Take a vector $e_1$ such that
$$
A(e_1, f_0) = 1
$$

Put $V_2 = \langle e_1, f_0 \rangle$ and $W_2 = V_2 ^{\perp_A}$

\item[Step $2k+1$.] After $2k$ steps we have constructed subspaces $V_{2k},
W_{2k}$ and a basis $f_0, e_1, f_1, \dots, f_{k-1}, e_k$ of the
space $V_{2k}$ such that

\begin{enumerate}
\item All vectors $e_i, f_j$ are orthogonal to the space $W_{2k}$
w.r.t. the form $A$ $$V_{2k} \perp_{A} W_{2k}$$

\item All vectors except maybe for $e_k$ are orthogonal to the
space $W_{2k}$ w.r.t. the form $B$ $$V_{2k-1} \perp_{B} W_{2k} $$

\item In the basis $f_0, e_1, f_1, \dots, f_{k-1}, e_k$ the restrictions of forms have matrices

$$
A|_{V_{2k}} = \begin{pmatrix}
0 & -1 &   &    &        &   &    \\
1 & 0  &   &    &        &   &    \\
  &    & 0 & -1 &        &   &    \\
  &    & 1 & 0  &        &   &    \\
  &    &   &    & \ddots &   &    \\
  &    &   &    &        & 0 & -1 \\
  &    &   &    &        & 1 & 0  \\
\end{pmatrix}
\quad B|_{V_{2k}} =\begin{pmatrix}
0 &    &   &         &        &   &    \\
  & 0  & 1 &         &        &   &    \\
  & -1 & 0 &         &        &   &    \\
  &    &   & \ddots  &        &   &    \\
  &    &   &         & 0      & 1 &    \\
  &    &   &         &  -1    & 0 &    \\
  &    &   &         &        &   & 0  \\
\end{pmatrix}
$$

\end{enumerate}

Take a vector $f_k \in W_{2k}$ such that
$$
B(e_k, f_k) = 1.
$$

Put $V_{2k+1} = V_{2k} \oplus \langle f_k \rangle$ and $W_{2k+1} =
V_{2k+1}^{\perp_B} \cap W_{2k}$

\item[Step $2k+2$.] After previous steps we have found subspaces $V_{2k+1},
W_{2k+1}$ and a basis $f_0, e_1, f_1, \dots, e_k, f_k$ of $V_{2k+1}$
such that

\begin{enumerate}
\item All vectors $e_i, f_j$ are orthogonal to the subspace
$W_{2k+1}$ w.r.t. the form $B$ $$V_{2k+1} \perp_{B} W_{2k+1}$$

\item All vector except maybe for $f_k$ are orthogonal to the subspace $W_{2k+1}$ w.r.t. the form $A$ $$V_{2k} \perp_{A} W_{2k+1} $$

\item In the basis $f_0, e_1, f_1, \dots, e_k, f_k$ the restrictions of forms have matrices

$$
A_{V_{2k+1}} = \begin{pmatrix}
 0  & -1 &         &        &   &    \\
 1 & 0 &         &        &   &    \\
    &   & \ddots  &        &   &    \\
    &   &         & 0      & -1 &    \\
    &   &         &  1    & 0 &    \\
    &   &         &        &   & 0
\end{pmatrix}
\quad B_{V_{2k+1}} =\begin{pmatrix}
0 &    &   &         &        &     \\
  & 0  & 1 &         &        &     \\
  & -1 & 0 &         &        &     \\
  &    &   & \ddots  &        &     \\
  &    &   &         & 0      & 1   \\
  &    &   &         &  -1    & 0
\end{pmatrix}
$$
\end{enumerate}

Take a vector $e_{k+1} \in W_{2k}$ such that
$$
A(e_{k+1}, f_k) = 1.
$$

Put $V_{2k+2} = V_{2k+1} \oplus \langle e_{k+1} \rangle$ and
$W_{2k+2} = V_{2k+2}^{\perp_A} \cap W_{2k+1}$

\end{enumerate}

If the algorithm stopped on the $2k$-th step (we could not find a
vector $f_{k+1}$), then $V_{2k}$ is a Jordan block with eigenvalue
$\infty$ and if we stopped on the $(2k+1)$-th step (there is no
vector $e_{k+1}$), then $V_{2k+1}$ is a Kronecker
$(2k+1)\times(2k+1)$ block.

Indeed, it is not hard to see that $V_i$ and $W_i$ form two sets of
nested subspaces
$$
V_1 \subset V_2 \subset V_3 \subset \dots
$$
$$
W_1 \supset W_2 \supset W_2 \supset \dots
$$

After $2k$ steps in the basis $f_0, e_1, f_1, \dots, f_{k-1}, e_k$
(and any additional basis of the space $W_{2k}$) the matrix of the
form $A$ is

$$
A = \left(
\begin{array}{c|c}
\begin{matrix}
   0  & -1 &         &        &     \\
  1 & 0 &         &        &     \\
     &   & \ddots  &        &     \\
     &   &         & 0      & -1   \\
     &   &         &  1    & 0
\end{matrix} &0 \\
\hline
0 & A_{2k}\\
\end{array}
\right),
$$

where $A_{2k}$ is the matrix of restriction of the form $A$ to
$W_{2k}$.

Analogously, after $(2k+1)$ steps in basis $f_0, e_1, f_1, \dots,
e_k, f_k$ (and an arbitrary basis of $W_{2k+1}$) the matrix of the
form $B$ is

$$
B = \left(
\begin{array}{c|c}
\begin{matrix}
0 &    &   &         &        &     \\
  & 0  & 1 &         &        &     \\
  & -1 & 0 &         &        &     \\
  &    &   & \ddots  &        &     \\
  &    &   &         & 0      & 1   \\
  &    &   &         &  -1    & 0
\end{matrix} &0 \\
\hline
0 & B_{2k+1}\\
\end{array}
\right)
$$

where $B_{2k+1}$ is the matrix of restriction of the form $B$ to
$W_{2k+1}$.

This concludes the proof. \end{proof}

\begin{remark}
Actually there is no need to prove Theorem
\ref{T:SelfAdjOper_On_SympSpace}. If the field $\mathbb{K}$ is
algebraically closed, then there always exists a degenerate linear
combination of forms $A + \lambda B$ for some $\lambda \in
\mathbb{K} \cup \{\infty\}$. If the field $\mathbb{K}$ is not
algebraically closed, then Theorem \ref{T:SelfAdjOper_On_SympSpace}
remains true if and only if all eigenvalues of $\mathcal{P}$ lie in
$\mathbb{K}$, or equivalently if the characteristic polynomial of
the operator $\mathcal{P}$ splits into linear factors over
$\mathbb{K}$ (compare to the Jordan normal form theorem). Kronecker
blocks and Jordan blocks with eigenvalue $\infty$ can be extracted
over any field with characteristic $\neq 2$ (we did not use
algebraic closeness of the field in that part of the proof).

\end{remark}

\end{document}